\numberwithin{equation}{section}
\newtheorem{theorem}{Theorem}[section]
\newtheorem{lemma}[theorem]{Lemma}
\newdefinition{remark}{Remark}[section]
\newdefinition{corollary}{Corollary}[section]
\newdefinition{example}{Example}[section]
\newproof{proof}{\textbf{Proof}}
\newproof{pot}{Proof of Theorem \ref{thm2}}
\journal{Manuscript}
\begin{document}

\begin{frontmatter}
\title{A note on the condition number of the scaled total least squares problem\tnoteref{t1}}
\tnotetext[t1]{This work is supported by a project of Shandong Province Higher Educational Science and Technology Program (Grant No. J17KA160) and the National Natural Science Foundation of China (Grant Nos. 11671059,11671060)}
\author[a]{Shaoxin Wang\corref{cor1}}
\ead{ shwangmy@163.com, shxwang@qfnu.edu.cn}
\author[b]{Hanyu Li}
\ead{lihy.hy@gmail.com}
\author[b]{Hu Yang}
\ead{hy@cqu.edu.cn}
\cortext[cor1]{Corresponding author}
\address[a]{School of Statistics, Qufu Normal University, \\ Qufu, 273165, P. R. China}
\address[b]{College of Mathematics and Statistics, Chongqing University, \\ Chongqing, 401331, P. R. China}

\begin{abstract}
  In this paper, we consider the explicit expressions of the normwise condition number for the scaled total least squares problem. Some techniques are introduced to simplify the expression of the condition number, and some new results are derived. Based on these new results, new expressions of the condition number for the total least squares problem can be deduced as a special case. New forms of the condition number enjoy some storage and computational advantages. We also proposed three different methods to estimate the condition number. Some numerical experiments are carried out to illustrate the effectiveness of our results.
\end{abstract}
\begin{keyword}
 condition number \sep Fr\'{e}chet derivative \sep the scaled total least squares problem \sep power method \sep probabilistic condition estimation method
\MSC[2010] 65F35 \sep 15A12  \sep 15A60
\end{keyword}
\end{frontmatter}


\section{Introduction}
The scaled total least squares (STLS) problem (or technique) was proposed by Rao \cite{Rao97} to give a unified treatment of ordinary least squares (OLS) problem, total least squares (TLS) problem and the data least squares (DLS) problem. Paige and Strak\v{o}s \cite{Paig02} reformulated the STLS problem and presented a detailed analysis of conditions that guarantee the STLS problem has a unique solution. Following their line, for $A\in\mathbb{R}^{m\times n}$ with $m>n$ and $b\in\mathbb{R}^m$, the STLS problem is given by
\begin{equation}\label{stls}
    \min \left\|\begin{bmatrix}
                  E & r \\
                \end{bmatrix}\right\|_F,\textrm{ subject to }\lambda b-r \in \mathcal{R}(A+E),
\end{equation}
where $\lambda$ is a positive real number, $\|\cdot\|_F$ denotes the Frobenius norm and $\mathcal{R}(\cdot)$ is the range space. Let
$[E_{S}\; r_{S}]$ be the solution to \eqref{stls}, then the solution to the linear system $(A+E_{S})\lambda x=\lambda b-r_{S}$ is called the STLS solution and denoted by $x_{S}$. As shown in \cite{Paig02}, when $\lambda=1$, $\lambda\rightarrow 0$ and $\lambda\rightarrow \infty$, $x_{S}$ becomes the TLS solution $x_{\intercal}$, OLS solution $x_\mathrm{O}$ and DLS solution $x_D$, respectively.

The condition number gives a quantitative measurement of the maximum amplification of the resulting change in solution with respect to a perturbation in the data and has been extensively studied for too many topics to list here. For the STLS problem, Zhou et al.~\cite{Zhou09} considered its perturbation analysis and presented the normwise, mixed and componentwise condition numbers. Based on the perturbation theory of singular value decomposition (SVD) given in \cite{SunJ88}, Li and Jia \cite{LiJ11} gave a different approach to derive the normwise and componentwise condition numbers of the STLS problem, and the structured condition numbers were also investigated there. It should be noted that the normwise condition number in \cite{Zhou09} contains a Kronecker product which makes it impractical to compute, especially for large-scale problems. Based on the fact that $\|A\|_2=\|A^TA\|_2^{1/2}$, $\|\cdot\|_2$ denotes the spectral norm of matrix or Euclidean norm of vector, some closed formulas and upper (or lower) bounds of normwise condition number for the TLS problem were given in \cite{BabG11} and \cite{JiaL13}, and these results are easy to compute and do not contain Kronecker product any more. Xie et al.~\cite{Xie14} showed that the expressions of condition number given in \cite{BabG11} and \cite{JiaL13} are mathematically equivalent. However, computing the matrix cross product $A^TA$ is a source of rounding errors and is potentially numerical unstable \cite[pp.~386]{High02}. Some progress to avoid computing $A^TA$ was made in \cite{BabG11, JiaL13, Xie14}. In this paper, we present a new expression of the normwise condition number for the STLS problem. The new expression is easy to compute and does not need to compute Kronecker product or matrix cross product. On the other hand, we also propose a procedure to compute the condition number, which does not need to form the explicit expression of condition number and avoid computing the Kronecker product.

The rest of the paper is organized as follows. Section \ref{Sect2} contains the main results of the paper. Some algorithms and numerical experiments are presented in Section \ref{Sect3}. Concluding remarks are given in Section \ref{Sect4}. Before proceeding to the following sections, we introduce some notation first: For any matrix $B$, $A\otimes B=[a_{ij}B]$ denotes the Kronecker product of $A$ and $B$, $\mathrm{vec}(\cdot)$ is a linear map defined by $\mathrm{vec}(A)=[a_{1,1},\cdots, a_{m,1},\cdots, a_{1,n},\cdots, a_{m,n}]^T$.

\section{Main results}
\label{Sect2}
As stated in the Introduction, the TLS can be treated as a special case of the STLS problem. An interesting result is that we can solve the STLS problem by finding the solution to a special TLS problem. When $\lambda=1$, we get the following TLS problem
\begin{equation}\label{tls}
    \min \left\|\begin{bmatrix}
                  E & r \\
                \end{bmatrix}\right\|_F,\textrm{ subject to } b-r \in \mathcal{R}(A+E).
\end{equation}
It is easy to check that when $x_{S}$ is the solution of \eqref{stls} then $\lambda x_{S}$ is the TLS solution to the following TLS problem
\begin{eqnarray}\label{sptls}
    \min \left\|\begin{bmatrix}
                  E & r \\
                \end{bmatrix}\right\|_F,\textrm{ subject to } \lambda b-r \in \mathcal{R}(A+E).
\end{eqnarray}
Let the SVDs of the matrices $[A \; \lambda b]$ and $A$ be
\begin{eqnarray*}
    U^T\begin{bmatrix}
         A & \lambda b \\
       \end{bmatrix}V=\Sigma,\quad
        \hat{U}^TA\hat{V}=\hat{\Sigma},
\end{eqnarray*}
where $U=[u_1, \cdots, u_{n+1}]\in \mathbb{R}^{m\times (n+1)}$, $V=[v_1,\cdots, v_{n}]\in \mathbb{R}^{(n+1)\times (n+1)}$, $\Sigma=\mathrm{diag}(\sigma_1,\cdots,\sigma_{n+1})$ with
           $\sigma_1\geq, \cdots,\geq \sigma_{n+1}\geq 0$, and
$\hat{\Sigma}=\mathrm{diag}(\hat{\sigma}_1,\cdots,\hat{\sigma}_{n})$
           with $\hat{\sigma}_1\geq, \cdots,\hat{\sigma}_{n}\geq 0$.
Analogous to the Golub-Van Loan condition \cite{GolV80} for TLS problem to guarantee the existence and uniqueness of a solution, Zhou et al. \cite{Zhou09} presented the following sufficient condition to ensure the STLS problem has a unique solution
\begin{equation}
\label{sfcd}
    \hat{\sigma}_{n}>\sigma_{n+1}.
\end{equation}
Therefore, by \eqref{sfcd} the TLS solution to \eqref{sptls} is
\begin{eqnarray*}
    \lambda x_{S}=(A^TA-\sigma_{n+1}I_n)^{-1}A^T(\lambda b),
\end{eqnarray*}
which gives
\begin{eqnarray}
\label{xS}
     x_{S}=(A^TA-\sigma_{n+1}I_n)^{-1}A^Tb.
\end{eqnarray}

Let $\Delta A$ and $\Delta b$ be the corresponding perturbations to $A$ and $b$, then we have the following perturbed STLS problem
\begin{equation}\label{pstls}
    \min \left\|\begin{bmatrix}
                  E & r \\
                \end{bmatrix}\right\|_F,\textrm{ subject to }\lambda (b+\Delta b)-r \in \mathcal{R}\left((A+\Delta A)+E\right).
\end{equation}
For the perturbed STLS problem, Zhou et al. \cite{Zhou09} and Li and Jia \cite{LiJ11} presented two different approaches to show that when the perturbation $[\Delta A \; \Delta b]$ is sufficiently small, the perturbed STLS problem admits a unique solution $x_{PS}$. We take the result given in \cite{LiJ11} as  the following theorem with some modifications of symbols.
\begin{theorem}
\label{Thm1}
Under the assumption \eqref{sfcd}, if $\|[\Delta A\; \Delta b]\|_F$ is small enough, then the perturbed STLS problem \eqref{pstls} has the unique solution $x_{PS}$. Moreover,
\begin{eqnarray}\label{dltX}
\Delta x=x_{PS}-x_S= K\begin{bmatrix}
            \mathrm{vec}(\Delta A) \\
            \Delta b \\
          \end{bmatrix}+\mathcal{O}\left(\left\|\left[\Delta A, \Delta b\right]\right\|_F^2\right)
\end{eqnarray}
where
\begin{eqnarray*}
    K=M^{-1}\left(\left(\frac{2}{\|r\|_2^2}A^Trr^T-A^T\right)
\left(\begin{bmatrix}
  x_S^T & -1 \\
\end{bmatrix}\otimes I_m\right)-\begin{bmatrix}
                                  I_n\otimes r^T & 0_{n\times m} \\
                                \end{bmatrix}\right)
\end{eqnarray*}
with $M=A^TA-\sigma^2_{n+1}I_n$ and $r=Ax_S-b$,
\end{theorem}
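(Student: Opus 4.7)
The plan is to derive the expansion by implicitly differentiating the closed-form expression \eqref{xS} rather than attacking \eqref{pstls} directly. First I would handle the existence/uniqueness of $x_{PS}$ by a continuity argument: the singular values of $[A,\lambda b]$ and of $A$ are Lipschitz functions of the data, so the strict inequality $\hat{\sigma}_n>\sigma_{n+1}$ in \eqref{sfcd} is preserved for all $[\Delta A,\Delta b]$ of sufficiently small Frobenius norm. Hence the perturbed problem satisfies the same sufficient condition, and its unique solution $x_{PS}$ admits, in analogy with \eqref{xS}, the representation
\begin{equation*}
\bigl((A+\Delta A)^{\!\top}(A+\Delta A)-\tilde\sigma_{n+1}^{2}I_n\bigr)x_{PS}=(A+\Delta A)^{\!\top}(b+\Delta b),
\end{equation*}
where $\tilde\sigma_{n+1}$ denotes the smallest singular value of $[A+\Delta A,\lambda(b+\Delta b)]$.

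Next I would subtract the unperturbed normal equation $Mx_S=A^{\!\top}b$ from the perturbed one and retain only first-order terms. This gives
\begin{equation*}
M\,\Delta x=-(\Delta A)^{\!\top}r-A^{\!\top}(\Delta A)x_S+A^{\!\top}\Delta b+(\Delta\sigma_{n+1}^{2})\,x_S+\mathcal{O}(\|[\Delta A,\Delta b]\|_F^{2}),
\end{equation*}
after using $b-Ax_S=-r$ and invertibility of $M$ (which follows from \eqref{sfcd}). The key remaining ingredient is a first-order formula for $\Delta\sigma_{n+1}^{2}$. I would obtain it from standard SVD perturbation theory applied to $[A,\lambda b]$: if $u_{n+1}$ and $v_{n+1}$ are the singular vectors associated with $\sigma_{n+1}$, then $\Delta\sigma_{n+1}=u_{n+1}^{\!\top}[\Delta A,\lambda\Delta b]\,v_{n+1}+\mathcal{O}(\|\cdot\|^2)$.

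The cleanest way to simplify this is to exploit two structural identities that come out of \eqref{xS}: first, $v_{n+1}$ is a normalized scalar multiple of $(\lambda x_S^{\!\top},-1)^{\!\top}$ and $u_{n+1}$ is a normalized scalar multiple of $r$, which together with $\|u_{n+1}\|_2=1$ yield $\sigma_{n+1}^{2}=\lambda^{2}\|r\|_2^{2}/(1+\lambda^{2}\|x_S\|_2^{2})$; second, multiplying $Mx_S=A^{\!\top}b$ out gives $A^{\!\top}r=\sigma_{n+1}^{2}x_S$. Substituting these into the perturbation formula collapses the $\lambda$ factors and produces
\begin{equation*}
(\Delta\sigma_{n+1}^{2})\,x_S=\frac{2}{\|r\|_2^{2}}A^{\!\top}rr^{\!\top}\bigl((\Delta A)x_S-\Delta b\bigr)+\mathcal{O}(\|\cdot\|^{2}).
\end{equation*}

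Finally, I would vectorize the right-hand side using $(\Delta A)x_S=(x_S^{\!\top}\otimes I_m)\mathrm{vec}(\Delta A)$ and $(\Delta A)^{\!\top}r=(I_n\otimes r^{\!\top})\mathrm{vec}(\Delta A)$ to package everything as a linear map applied to $[\mathrm{vec}(\Delta A)^{\!\top},\Delta b^{\!\top}]^{\!\top}$, multiply by $M^{-1}$, and read off the claimed form of $K$. I expect the main obstacle to be the computation of $\Delta\sigma_{n+1}^{2}$: one must apply the SVD perturbation identity to the augmented matrix $[A,\lambda b]$ (not to $A$ alone), track the normalization constants of $u_{n+1}$ and $v_{n+1}$ carefully, and then see how the relation $A^{\!\top}r=\sigma_{n+1}^{2}x_S$ absorbs the resulting $\lambda$-dependent prefactor so that $K$ ends up $\lambda$-free, exactly as stated.
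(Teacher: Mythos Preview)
The paper does not actually prove Theorem~\ref{Thm1}; it is quoted verbatim from Li and Jia~\cite{LiJ11} (with Zhou et al.~\cite{Zhou09} cited as an alternative derivation), and the surrounding text only remarks that the Li--Jia argument rests on Sun's SVD perturbation theory~\cite{SunJ88}. So there is no in-paper proof to compare against.

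That said, your proposal is correct and is precisely in the spirit of the cited references: differentiate the closed-form normal equation $Mx_S=A^{\!\top}b$, feed in the first-order SVD perturbation $\Delta\sigma_{n+1}=u_{n+1}^{\!\top}[\Delta A,\lambda\Delta b]\,v_{n+1}$ for the augmented matrix, and simplify. Your bookkeeping checks out: with $u_{n+1}=r/\|r\|_2$, $v_{n+1}=(1+\lambda^{2}\|x_S\|_2^{2})^{-1/2}(\lambda x_S^{\!\top},-1)^{\!\top}$, and $\sigma_{n+1}^{2}=\lambda^{2}\|r\|_2^{2}/(1+\lambda^{2}\|x_S\|_2^{2})$, one obtains $\Delta\sigma_{n+1}^{2}=2\sigma_{n+1}\Delta\sigma_{n+1}=\tfrac{2\sigma_{n+1}^{2}}{\|r\|_2^{2}}\,r^{\!\top}\bigl((\Delta A)x_S-\Delta b\bigr)$ to first order, and then the identity $A^{\!\top}r=\sigma_{n+1}^{2}x_S$ (a direct rearrangement of $Mx_S=A^{\!\top}b$) turns $(\Delta\sigma_{n+1}^{2})x_S$ into $\tfrac{2}{\|r\|_2^{2}}A^{\!\top}rr^{\!\top}\bigl((\Delta A)x_S-\Delta b\bigr)$, absorbing all $\lambda$-dependence exactly as you anticipated. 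The final vectorization step yields $K$ as stated.
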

Li and Jia \cite{LiJ11} also presented a vary detailed comparison of the above results and those given in \cite{Zhou09} and showed that their perturbation estimate is the same as that given in \cite{Zhou09}. In addition, according to Theorem \ref{Thm1} we can deduce that if we set
\begin{eqnarray*}
  F: \mathbb{R}^{m\times n}\times \mathbb{R}^{m} & \rightarrow & \mathbb{R}^{n} \\
  \begin{bmatrix}
    A & \lambda b \\
  \end{bmatrix}
  &\rightarrow & x_{S}=M^{-1}A^Tb
\end{eqnarray*}
then the map $F$ is Fr\'{e}chet differentiable at $[A\; \lambda b]$ under the assumption \eqref{sfcd} and the Fr\'{e}chet derivative of $F$ at $[A\; \lambda b]$ is given by
\begin{eqnarray*}
    DF(A,\lambda b):=K.
\end{eqnarray*}

According to the definition of condition number given in \cite{Rice66} and \cite{Geurt82}, the relative normwise condition number of STLS problem is given by
\begin{equation}\label{defcd}
    \kappa_{rF}(A,\lambda b)=\lim_{\delta\rightarrow 0}\sup_{\left\|[\Delta A \; \lambda \Delta b]\right\|_F<\delta}\frac{\frac{\left\|F\left(A+\Delta A,\lambda (b+\Delta b)\right)-F(A, \lambda b)\right\|_2}{\|F(A,\lambda b)\|_2}}{\frac{\left\|[\Delta A \; \lambda \Delta b]\right\|_F}{\left\|[A \; \lambda b]\right\|_F}}.
\end{equation}
When $F$ is Fr\'{e}chet differentiable, $\kappa_F(A,\lambda b)$ reduces to
\begin{eqnarray*}
    \kappa_{rF}(A,\lambda b)=\frac{\left\|DF\left(A,\lambda b\right)\right\|_2\left\|[A \; \lambda b]\right\|_F}{\|F(A,\lambda b)\|_2},
\end{eqnarray*}
and $\kappa_{F}(A,\lambda b)=\left\|DF\left(A,\lambda b\right)\right\|_2$ is the absolute condition number. We should remark that the definition of condition number given by \eqref{defcd} can be extended to a more general sense. Wang and Yang \cite{Wang16} presented a unified definition of condition number to cope with the conditioning of equality constrained indefinite least squares problem, which include the normwise, mixed and componentwise condition numbers as its special cases, for further discussions see \cite{Wang16,LiW16}.

For the convenience of presentation, we summarize the above discuss in the following theorem.
\begin{theorem}\label{Thm2}
Under the assumption \eqref{sfcd}, the relative normwise condition number of STLS problem defined by \eqref{defcd} is
\begin{eqnarray*}
    \kappa_{rF}(A,\lambda b)=\frac{\left\|K\right\|_2\left\|[A \; \lambda b]\right\|_F}{\|F(A,\lambda b)\|_2},
\end{eqnarray*}
and its absolute condition number is given by
\begin{equation}
\label{eqblk0}
    \kappa_{F}(A,\lambda b)=\left\|K\right\|_2.
\end{equation}
\end{theorem}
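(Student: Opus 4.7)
The plan is to derive Theorem~\ref{Thm2} as an immediate consequence of Theorem~\ref{Thm1} together with the Rice--Geurts characterization of normwise conditioning. First I would invoke Theorem~\ref{Thm1} to record that, under assumption \eqref{sfcd}, the solution map $F$ is Fréchet differentiable at $[A\;\lambda b]$ with $DF(A,\lambda b)=K$. This step is already announced in the paragraph immediately preceding the theorem statement, so no fresh work is needed; I would simply cite it.

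Next I would substitute the first-order expansion \eqref{dltX} into the supremum appearing in \eqref{defcd}. Writing $\Delta x = K\,[\mathrm{vec}(\Delta A)^T,\,\Delta b^T]^T + \mathcal{O}(\|[\Delta A,\Delta b]\|_F^2)$ and dividing by the perturbation norm $\|[\Delta A\;\lambda\Delta b]\|_F$, the second-order remainder becomes negligible as $\delta\to 0$. The constant factor $\|[A\;\lambda b]\|_F/\|F(A,\lambda b)\|_2$ pulls out of the supremum, and the remaining supremum of the linear leading term over all unit-norm perturbation directions is by definition the induced operator $2$-norm of $K$. Collecting terms yields the claimed formula for $\kappa_{rF}(A,\lambda b)$; the absolute condition number \eqref{eqblk0} then follows by dropping the two normalization factors from the definition, i.e.\ reading the supremum as $\lim_{\delta\to0}\sup_{\|[\Delta A\;\lambda\Delta b]\|_F<\delta}\|\Delta x\|_2/\|[\Delta A\;\lambda\Delta b]\|_F$.

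The main point requiring care is the bookkeeping of the scaling parameter $\lambda$: the matrix $K$ in Theorem~\ref{Thm1} is presented as acting on $[\mathrm{vec}(\Delta A);\Delta b]$, whereas the perturbation in \eqref{defcd} is weighted as $[\Delta A\;\lambda\Delta b]$. A consistent change of variables $\Delta b\mapsto \lambda^{-1}(\lambda\Delta b)$ must be used so that the identification of the supremum with $\|K\|_2$ is unambiguous and so that the claimed absolute condition number really equals $\|K\|_2$ rather than $\|K\,\mathrm{diag}(I_{mn},\lambda^{-1}I_m)\|_2$. Apart from this small bookkeeping issue, the proof is a routine application of the standard correspondence between Fréchet derivative and normwise condition number and should fit in only a few lines.
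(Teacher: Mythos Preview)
Your proposal is correct and follows the paper's own approach: the paper does not give Theorem~\ref{Thm2} a separate proof but presents it explicitly as a summary of the preceding discussion, which is precisely the Rice--Geurts reduction you outline (cite Theorem~\ref{Thm1} for Fr\'echet differentiability with $DF(A,\lambda b)=K$, then identify the supremum in \eqref{defcd} with the operator $2$-norm of the derivative). Your caveat about the $\lambda$-bookkeeping is well taken and in fact goes beyond what the paper does: the paper simply declares $DF(A,\lambda b):=K$ without explicitly reconciling that $K$ in \eqref{dltX} acts on $[\mathrm{vec}(\Delta A);\Delta b]$ while the denominator in \eqref{defcd} is $\|[\Delta A\;\lambda\Delta b]\|_F$, so your care on this point is an improvement rather than a gap.
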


It should be noted that the Kronecker product enlarges the order of matrix and may make it impractical to explicitly forming $K$ when $m$ and $n$ are large. For the TLS problem, adjoint techniques are employed to eliminate the Kronecker product in \cite{Xie14, BabG11}. Following their step and going to further, we give the following theorem to simplify the condition number of the STLS problem, which is also the main result of our paper.

Considering the relationship between relative and absolute condition numbers, in the following we only focus on the simplification of $\kappa_{F}(A,\lambda b)$.
\begin{theorem}
\label{Thm.main}
The absolute condition number $\kappa_{F}(A,\lambda b)$ for STLS problem has the following two equivalent forms
\begin{equation}\label{eqform1}
    \kappa_{F1}(A,\lambda b)=\left\|M^{-1}\left((1+\|x_S\|^2_2)A^TA-A^Trx_S^T-x_Sr^TA+\|r\|_2^2I_n\right)M^{-1}\right\|_2^{\frac{1}{2}},
\end{equation}
and
\begin{eqnarray}\label{eqform2}
    \kappa_{F2}(A,\lambda b)=\left\|M^{-1}\begin{bmatrix}
        A^T, & \|x_S\|_2A^T\left(I_m-\frac{1}{\|r\|_2^2}rr^T\right), & \|r\|_2\left(I_n-\frac{1}{\|r\|_2^2}A^Trx_S^T\right) \\
      \end{bmatrix}\right\|_2.
\end{eqnarray}
\end{theorem}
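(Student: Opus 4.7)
The plan is to reduce the absolute condition number $\|K\|_2$ to the norm of a matrix that no longer contains Kronecker products, by passing through $KK^T$. Since $M = A^TA - \sigma_{n+1}^2 I_n$ is symmetric, $M^{-T} = M^{-1}$, and we have the identity $\|K\|_2 = \|KK^T\|_2^{1/2}$; this is what will let us convert the $n\times m(n+1)$ Kronecker-laden matrix $K$ into an $n\times n$ matrix expressed purely in terms of $A$, $r$, $x_S$ and $M$.

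First I would split $[x_S^T,\,-1]\otimes I_m = [x_S^T\otimes I_m,\; -I_m]$, and set $P = \tfrac{2}{\|r\|_2^2}A^Trr^T - A^T$, so that $K = M^{-1}[K_1,\,K_2]$ with
\begin{equation*}
K_1 = P(x_S^T\otimes I_m) - I_n\otimes r^T,\qquad K_2 = -P.
\end{equation*}
Then $KK^T = M^{-1}(K_1K_1^T + K_2K_2^T)M^{-1}$, and the key step is expanding $K_1K_1^T$ using the mixed-product rule $(A\otimes B)(C\otimes D) = AC\otimes BD$. The four cross terms collapse via the scalar-vector identities $(x_S^T\otimes I_m)(x_S\otimes I_m) = \|x_S\|_2^2\, I_m$, $(x_S^T\otimes I_m)(I_n\otimes r) = x_S^T\otimes r = rx_S^T$, and $(I_n\otimes r^T)(I_n\otimes r) = \|r\|_2^2\, I_n$, producing
\begin{equation*}
K_1K_1^T = \|x_S\|_2^2\, PP^T - (Pr)x_S^T - x_S(Pr)^T + \|r\|_2^2\, I_n.
\end{equation*}

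The numerically pleasant simplifications, which I expect to be the heart of the proof, are the two algebraic identities
\begin{equation*}
PP^T = A^TA,\qquad Pr = A^Tr,
\end{equation*}
both obtained by direct expansion using $r^Tr = \|r\|_2^2$; the rank-one corrections in $P$ exactly cancel. Substituting these into $K_1K_1^T + K_2K_2^T = K_1K_1^T + PP^T$ immediately yields
\begin{equation*}
KK^T = M^{-1}\!\left[(1+\|x_S\|_2^2)A^TA - A^Trx_S^T - x_Sr^TA + \|r\|_2^2 I_n\right]\!M^{-1},
\end{equation*}
and taking $\|\cdot\|_2^{1/2}$ gives \eqref{eqform1}.

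For \eqref{eqform2} I would verify by inspection that if $N$ denotes the block matrix inside the norm in \eqref{eqform2}, then $NN^T$ reproduces exactly the same symmetric matrix. Using that $I_m - \tfrac{1}{\|r\|_2^2}rr^T$ is an orthogonal projector (hence idempotent), the middle block contributes $\|x_S\|_2^2 A^T A - \tfrac{\|x_S\|_2^2}{\|r\|_2^2}A^Trr^TA$, while the third block contributes $\|r\|_2^2 I_n - A^Trx_S^T - x_S r^TA + \tfrac{\|x_S\|_2^2}{\|r\|_2^2}A^Trr^TA$; the spurious $A^Trr^TA$ terms cancel and combining with $A^TA$ from the first block gives the same Hermitian matrix as in form 1. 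Hence $\|M^{-1}N\|_2^2 = \|M^{-1}NN^TM^{-1}\|_2 = \|K\|_2^2$, which proves \eqref{eqform2}. The main obstacle is keeping the Kronecker bookkeeping straight in step one and spotting the two clean cancellations $PP^T=A^TA$ and $Pr=A^Tr$; once those are in hand, both forms drop out in parallel.
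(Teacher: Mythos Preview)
Your argument is correct. For form~\eqref{eqform1} you follow essentially the same route as the paper---compute $KK^T$ and collapse the Kronecker products---though your organization around the auxiliary matrix $P=\tfrac{2}{\|r\|_2^2}A^Trr^T-A^T$ and the two clean identities $PP^T=A^TA$, $Pr=A^Tr$ is more explicit than the paper's one-line appeal to the $\mathrm{vec}$ identity; the paper simply states the final expression \eqref{eqblk5} after citing $\mathrm{vec}(AXB)=(B^T\otimes A)\mathrm{vec}(X)$.

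For form~\eqref{eqform2} the two proofs genuinely diverge. The paper proceeds \emph{constructively}: it rewrites the symmetric matrix in \eqref{eqform1} as $[A^T\ \ I_n]\,S\,[A^T\ \ I_n]^T$ for a block $2\times 2$ matrix $S$, performs a block LDL${}^T$-type factorization of $S$, and then further factors the resulting Schur-complement block $I_m+\|x_S\|_2^2\bigl(I_m-\tfrac{1}{\|r\|_2^2}rr^T\bigr)$ to produce the three-block matrix $W$. You instead take the target matrix $N$ as given and \emph{verify} that $NN^T$ reproduces the same symmetric matrix, exploiting idempotence of the projector $I_m-\tfrac{1}{\|r\|_2^2}rr^T$ and the cancellation of the $A^Trr^TA$ cross-terms. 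Your verification is shorter and entirely adequate once the form of $N$ is on the table; the paper's block-factorization explains \emph{where} that particular three-block shape comes from, which is useful if one wants to derive analogous Kronecker-free factors in related problems rather than guess and check.
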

\begin{proof}
For a real matrix $X$, $\|X\|_2=\|X^TX\|_2^{1/2}=\|XX^T\|_2^{1/2}$ holds. Analogous to \cite{BabG11}, we have
\begin{eqnarray*}
    \left\|K\right\|_2=\left\|KK^T\right\|_2^{\frac{1}{2}}.
\end{eqnarray*}
Since $M$ is symmetric, by the equality $\mathrm{vec}(AXB)=(B^T\otimes A)\mathrm{vec}(X)$
\cite[Chapt.4]{Horn91} we can get
\begin{eqnarray}
    KK^T&=&M^{-1}\left(\left(\frac{2}{\|r\|_2^2}A^Trr^T-A^T\right)\left(\begin{bmatrix}
  x_S^T & -1 \\
  \end{bmatrix}\otimes I_m\right)-\begin{bmatrix}
                                  I_n\otimes r^T & 0_{n\times m} \\                               \end{bmatrix}\right)\nonumber\\
  & &\quad\times \left(\left(\begin{bmatrix}
    x_S \\
   -1 \\
  \end{bmatrix}\otimes I_m\right)\left(\frac{2}{\|r\|_2^2}rr^TA-A\right)-\begin{bmatrix}
                                  I_n\otimes r \\
                                  0_{m\times n} \\                               \end{bmatrix}\right)M^{-1}\nonumber\\
  &=&M^{-1}\left((1+\|x_S\|^2_2)A^TA-A^Trx_S^T-x_Sr^TA+\|r\|_2^2I_n\right)M^{-1}\label{eqblk5}\\
  &=&M^{-1}\left(\begin{bmatrix}
                 A^T & I_n \\
               \end{bmatrix}
               \begin{bmatrix}
                 \left(1+\|x_S\|_2^2\right)I_m & -rx_S^T \\
                 -x_Sr^T & \|r\|_2^2I_n \\
               \end{bmatrix}
               \begin{bmatrix}
                 A \\
                 I_n \\
               \end{bmatrix}
  \right)M^{-1}.\label{eqblk}
\end{eqnarray}
Since
\begin{eqnarray}
    \begin{bmatrix}
\left(1+\|x_S\|_2^2\right)I_m & -rx_S^T \\
-x_Sr^T & \|r\|_2^2I_n \\
\end{bmatrix}&=&\begin{bmatrix}
                I_m & -\frac{1}{\|r\|_2^2}rx_S^T \\
                0_{n\times m} & I_n \\
              \end{bmatrix}
              \begin{bmatrix}
                I_m+\|x_S\|_2^2\left(I_m-\frac{1}{\|r\|_2^2}rr^T\right) & 0_{m\times n} \\
                0_{n\times m} & \|r\|_2^2I_n  \\
              \end{bmatrix}\nonumber\\
& &\quad \times \begin{bmatrix}
                I_m & 0_{m\times n} \\
                -\frac{1}{\|r\|_2^2}x_Sr^T & I_n \label{eqblk1}\\
              \end{bmatrix}
\end{eqnarray}
and
\begin{eqnarray}\label{eqblk2}
I_m+\|x_S\|_2^2\left(I_m-\frac{1}{\|r\|_2^2}rr^T\right)=
\begin{bmatrix}
I_m & \|x_S\|_2\left(I_m-\frac{1}{\|r\|_2^2}rr^T\right) \\
\end{bmatrix}
\begin{bmatrix}
  I_m \\
  \|x_S\|_2\left(I_m-\frac{1}{\|r\|_2^2}rr^T\right) \\
\end{bmatrix},
\end{eqnarray}
we substitute \eqref{eqblk2} and \eqref{eqblk1} into \eqref{eqblk} and get
\begin{eqnarray}\label{eqblk4}
    KK^T=M^{-1}WW^TM^{-1},
\end{eqnarray}
where
\begin{eqnarray*}
    W=\begin{bmatrix}
        A^T & \|x_S\|_2A^T\left(I_m-\frac{1}{\|r\|_2^2}rr^T\right) & \|r\|_2\left(I_n-\frac{1}{\|r\|_2^2}A^Trx_S^T\right) \\
      \end{bmatrix}.
\end{eqnarray*}
Using the formula $\left\|K\right\|_2=\left\|KK^T\right\|_2^{1/2}$ again, we complete the proof with \eqref{eqblk5}  and \eqref{eqblk4}. $\Box$
\end{proof}
\begin{remark}\label{rmk2.1}
From Theorem \ref{Thm.main}, we can see that the orders of the matrices in \eqref{eqblk0}, \eqref{eqform1} and \eqref{eqform2} are $n\times m(n+1)$, $n\times n$ and $n\times (2m+n)$, respectively. When $m$ and $n$ are comparable and large, the two equivalent forms given by Theorem \ref{Thm.main} no longer contain a Kronecker product, and thus have some superiorities in storage and practical computation. But as pointed out in \cite{BabG11} and \cite{High02}, the matrix cross product $KK^T$ may lead to large rounding errors, so $\kappa_{F2}(A,\lambda b)$ is preferable for numerical stability.
\end{remark}

It should be noted that when $\lambda=1$, we get the TLS problem from \eqref{stls}. Based on Theorems \ref{Thm2} and \ref{Thm.main}, different expressions of the condition number for TLS problem follow
\begin{eqnarray*}
    \kappa_{TLSF}(A, b)=\left\|M^{-1}\left(\left(\frac{2}{\|r\|_2^2}A^Trr^T-A^T\right)
\left(\begin{bmatrix}
  x_{\intercal}^T & -1 \\
\end{bmatrix}\otimes I_m\right)-\begin{bmatrix}
                                  I_n\otimes r^T & 0_{n\times m} \\
                                \end{bmatrix}\right)\right\|_2,
\end{eqnarray*}
\begin{eqnarray}
     \kappa_{TLSF1}(A, b)=\left\|M^{-1}\left((1+\|x_{\intercal}\|^2_2)A^TA-A^Trx_{\intercal}^T-x_{\intercal}r^TA+\|r\|_2^2I_n\right)M^{-1}\right\|_2^{\frac{1}{2}},
\end{eqnarray}
and
\begin{eqnarray}
    \kappa_{TLSF2}(A, b)&=\left\|M^{-1}\begin{bmatrix}
        A^T & \|x_{\intercal}\|_2A^T\left(I_m-\frac{1}{\|r\|_2^2}rr^T\right) & \|r\|_2\left(I_n-\frac{1}{\|r\|_2^2}A^Trx_{\intercal}^T\right) \\
      \end{bmatrix}\right\|_2.\label{eqform21}
\end{eqnarray}
We note that $\kappa_{TLSF1}(A, b)$ was an intermediate result of Theorem 1 in \cite[Equation 3.8]{BabG11}, and $\kappa_{TLSF}(A, b)$ is given by Jia and Li \cite[Theorem 2]{JiaL13}. Based on $Mx_{\intercal}=A^Tb$ and its variants,  Baboulin and Gratton \cite{BabG11} also showed that
\begin{eqnarray}
    \kappa_{TLSF1}(A, b)&=\left\|(1+\|x_{\intercal}\|_2^2)M^{-1}\left(A^TA+\sigma_{n+1}\left(I_n-\frac{2}{1+\|x_{\intercal}\|_2^2}x_{\intercal}x_{\intercal}^T\right)\right)M^{-1}\right\|_2^{\frac{1}{2}},\label{eqform21}
\end{eqnarray}
and suggested that when the TLS problem is solved by the SVD method, the computation of \eqref{eqform21} can be further simplified. But their simplified expression needs the SVDs of both $A$ and $[A,b]$, this may be expensive. Jia and Li \cite{JiaL13} further showed that only the SVD of $[A,b]$ will be enough.  Based on the SVDs of $A$ and/or $[A,b]$, some computable upper and lower bounds of the condition number were also presented in \cite{BabG11} and \cite{JiaL13}.  Furthermore, it can be easily checked that
\begin{eqnarray*}
  A^TA+\sigma_{n+1}\left(I_n-\frac{2}{1+\|x_{\intercal}\|_2^2}x_{\intercal}x_{\intercal}^T\right)
\end{eqnarray*}
is positive definite. Xie et al. \cite[Remark 2]{Xie14} suggested that to use Cholesky decomposition to further simplify the expression of condition number, but no explicit expression was given there. According to Remark \ref{rmk2.1}, our new expression $\kappa_{TLSF2}(A, b)$ needs less storage space, and does not need to calculate Cholesky decomposition. We only need to calculate the product of matrices and vectors, since $M^{-1}$ can be the intermediate result when the TLS problem is solved with its normal equation. So we may say that the $\kappa_{TLSF2}(A, b)$ is a new result on the condition number of TLS problem, and enjoys storage and computational advantages.

As in \cite{Zhou09} and \cite{LiJ11}, when $\lambda\rightarrow 0$, we get $\sigma_{n+1}\rightarrow 0$. Therefore, $(A^TA-\sigma_{n+1})^{-1}\rightarrow (A^TA)^{-1}$ and $x_S$ converges to $x_\mathrm{O}$. When the matrix $A$ has full column rank, from Theorems \ref{Thm2} and \ref{Thm.main} and by the equality $A^Tr=0$, we get the following three equivalent expressions of the condition number for OLS problem
\begin{eqnarray*}
        \kappa_{OLSF}(A, b)=\left\|(A^TA)^{-1}\left(-A^T
\left(\begin{bmatrix}
  x_\mathrm{O}^T & -1 \\
\end{bmatrix}\otimes I_m\right)-\begin{bmatrix}
                                  I_n\otimes r^T & 0_{n\times m} \\
                                \end{bmatrix}\right)\right\|_2,
\end{eqnarray*}
\begin{eqnarray}\label{cdOLS1}
    \kappa_{OLSF1}(A,\lambda b)=\left\|(A^TA)^{-1}\left((1+\|x_\mathrm{O}\|^2_2)A^TA+\|r\|_2^2I_n\right)(A^TA)^{-1}\right\|_2^{\frac{1}{2}},
\end{eqnarray}
and
\begin{eqnarray}\label{cdOLS2}
   \kappa_{OLSF2}(A,\lambda b)=\left\|(A^TA)^{-1}\begin{bmatrix}
        A^T & \|x_\mathrm{O}\|_2A^T & \|r\|_2I_n\\
      \end{bmatrix}\right\|_2.
\end{eqnarray}
With a little algebra, we can check that $\kappa_{OLSF}(A, b)$ can be rewritten as follows
\begin{eqnarray}\label{cdOLS}
        \kappa_{OLSF}(A, b)=\left\|\begin{bmatrix}
                                     -(x_\mathrm{O}^T\otimes A^{\dagger})-(A^TA)^{-1}\otimes r^T & A^{\dagger} \\
                                   \end{bmatrix}\right\|_2,
\end{eqnarray}
where $A^{\dagger}=(A^TA)^{-1}A^T$ is the Moore-Penrose inverse of matrix $A$ (see \cite{BenI03,WangWQ04}). It should be noted that \eqref{cdOLS}, \eqref{cdOLS1} and \eqref{cdOLS2} have been given by Li and Wang \cite{LiW16} in investigating the condition number for indefinite least squares problem.
\begin{remark}
In \cite{LiJ11}, the authors also considered the linear structured condition number for the STLS problem. Although we can also make some progress on finding the compact form, like \eqref{eqform1}, of the linear structured condition number through the method given in \cite{LiW16b}, the final expression  may enjoy some computational advantage and is still very complicated. So in this paper we will not consider the normwise structured condition number for the STLS problem, for more research on structured condition number (see, e.g., \cite{Rump03a, Rump03b, DHigh92}).
\end{remark}

\section{Numerical experiment}
\label{Sect3}
In this part, we mainly focus on the computation of the condition number for STLS problem via its different forms. We note that the main task of calculating the condition number is to find the maximum eigenvalue of a matrix. For a large matrix, iterative techniques are always preferred in finding its extreme eigenvalues, a standard reference is \cite[Chapt.~10]{GoluC}.  Since the product of matrix and vector can be used to cancel the Kronecker product which coincides with the main step of the power method \cite[pp.~365]{GoluC},  Baboulin and  Gratton \cite{BabG11} proposed an iteration of the power method to compute the condition number for TLS problem. Similar to \cite[Proposition~3]{BabG11}, an iterative procedure can also be established.

To apply the power method, from equation \eqref{eqblk0} we get
\begin{eqnarray*}
    K^Ty=\left(\left(\frac{2}{\|r\|_2^2}A^Trr^T-A^T\right)
\left(\begin{bmatrix}
  x_S^T & -1 \\
\end{bmatrix}\otimes I_m\right)-\begin{bmatrix}
                                  I_n\otimes r^T & 0_{n\times m} \\
                                \end{bmatrix}\right)^TM^{-1}y.
\end{eqnarray*}
Since computing the inverse of a matrix is not desired, we may set $z=M^{-1}y$ and get $z$ by solving the linear system $Mz=y$. Thus, we can obtain
\begin{eqnarray*}
K^Ty&=&\left(\left(\begin{bmatrix}
                    x \\
                    -1 \\
                  \end{bmatrix}\otimes I_m\right)
                  \left(\frac{2}{\|r\|_2^2}rr^TA-A\right)-
                  \begin{bmatrix}
                    I_n\otimes r \\
                    0_{m\times n} \\
                  \end{bmatrix}
\right)z\\
&=&\mathrm{vec}\left(\begin{bmatrix}
         wx^T & -w \\
       \end{bmatrix}\right)-
       \mathrm{vec}\left(
       \begin{bmatrix}
         rz^T & 0_{m\times 1} \\
       \end{bmatrix}\right)\\
&=&\mathrm{vec}\left(\begin{bmatrix}
         wx^T-rz^T & -w \\
       \end{bmatrix}\right),
\end{eqnarray*}
where $w=\left(\frac{2}{\|r\|_2^2}rr^TA-A\right)z$.
We present the following algorithm for computing the condition number \eqref{eqblk0}, which circumvents the Kronecker product. The derivation of Algorithm \ref{Algpw} is very similar to the Algorithm 1 in \cite{BabG11}, so we omit some details.

\begin{algorithm}[htbp]
\caption{Power method}\label{Algpw}
Given the initial $y\in \mathbb{R}^n$.\\
\textbf{for }$\mathbf{i=1,2,\cdots}$
\begin{enumerate}
  \item $\begin{bmatrix}
          A_p & b_p \\
        \end{bmatrix}\leftarrow\begin{bmatrix}
         wx^T-rz^T & -w \\
       \end{bmatrix}$
  \item $v\leftarrow \left\|\begin{bmatrix}
          A_p & b_p \\
        \end{bmatrix}\right\|_F$
  \item $\begin{bmatrix}
          A_p & b_p \\
        \end{bmatrix}\leftarrow\frac{1}{v}
        \begin{bmatrix}
          A_p & b_p \\
        \end{bmatrix}$
  \item $y\leftarrow M^{-1}\left(\left(\frac{2}{\|r\|_2^2}A^Trr^T-A^T\right)
\left(A_px-b_p\right)-A_p^Tr\right)$
\end{enumerate}
\textbf{end}\\
$\kappa_{pwF}(A,\lambda b)=\sqrt{v}.$
\end{algorithm}

\begin{remark}
\label{rmk3.1}
Algorithm \ref{Algpw} is used to compute $v$, the maximum eigenvalue of $KK^T$, so we use $\sqrt{v}$ as the condition number of STLS problem. Moreover, the power method converges if $v$ is dominant and the initial vector $y$ has a component in the direction of the corresponding dominant eigenvector \cite[pp.~366]{GoluC}. The choice of $y$ is usually implied by applications or by using random vector.
\end{remark}

We also present a probabilistic condition estimation (PCE) method. This method is based on the probabilistic spectral norm estimator  proposed by Hochstenbach in \cite{R_Hochstenbach13}, which gives an interval containing the spectral norm of a matrix with high probability. The PCE method has been used to estimate the condition number of various problems (see \cite{LiW16,Wang15}).  For the convenience of presentation, we summarize the method given in \cite{R_Hochstenbach13} as the following lemma.
\begin{lemma}\label{LemPCE}
Let $\mathcal{U}(S_{p-1})$ be the uniform distribution over unit sphere $S_{p-1}$ in $R^{p}$, and $A\in \mathbb{R}^{m\times n}$. If we choose a random vector $z$ from $\mathcal{U}(S_{p-1})$, then by the probabilistic spectral norm estimator we have
\begin{eqnarray*}
  \alpha &\leq & \|A\|_2, \\
   \|A\|_2 &\leq & \beta \textrm{ with probability at least } 1-\epsilon, \\
  {\beta}/{\alpha} &\leq & 1+\theta,
\end{eqnarray*}
where $\alpha$ is the guaranteed lower bound, $\beta$ is the probabilistic upper bound, $\epsilon$ and $\theta$ are two use-chosen parameters.
\end{lemma}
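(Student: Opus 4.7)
The plan is to follow the classical randomized approach to spectral norm estimation: draw a single random unit vector $z$, compute a cheap scalar from it, and argue both a deterministic lower bound and a probabilistic upper bound. The first inequality is essentially free: for any unit vector $z$ and any quantity $\alpha$ extracted from a finite computation on $z$ (for instance $\alpha := \|Az\|_2$, or a Ritz value coming from a short Krylov run on $A^TA$ started at $z$), one has $\alpha \le \|A\|_2$ deterministically, because $\|Az\|_2 \le \|A\|_2 \|z\|_2 = \|A\|_2$ and any Ritz value is dominated by the spectral radius.

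For the probabilistic upper bound I would diagonalize $A$. Writing the SVD $A = U\Sigma V^T$ and using orthogonal invariance of $\mathcal{U}(S_{p-1})$, the rotated vector $w := V^T z$ is again uniform on the sphere, so the Gaussian parametrization $w_i = g_i/\|g\|_2$ with $g \sim \mathcal{N}(0,I_p)$ gives
\begin{equation*}
\|Az\|_2^2 \;=\; \sum_{i=1}^{p} \sigma_i^2\, w_i^2 \;\ge\; \sigma_1^2\, w_1^2 \;=\; \|A\|_2^2 \, \frac{g_1^2}{\|g\|_2^2}.
\end{equation*}
The ratio $g_1^2/\|g\|_2^2$ has a $\mathrm{Beta}(\tfrac12,\tfrac{p-1}{2})$ distribution, whose left tail is an elementary incomplete-beta integral, yielding an explicit constant $\gamma = \gamma(p,\epsilon) \in (0,1]$ with $\mathbb{P}(\|Az\|_2 \ge \gamma\|A\|_2) \ge 1 - \epsilon$. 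Setting $\beta := \alpha/\gamma$ then produces an upper bound on $\|A\|_2$ that holds with probability at least $1-\epsilon$, establishing the first two bullets.

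The main obstacle is the last requirement, $\beta/\alpha \le 1+\theta$. For a single sample, the factor $1/\gamma$ coming from the Beta tail is usually much larger than $1+\theta$, and degrades quickly as $p$ grows or $\epsilon$ shrinks. The standard remedy is to replace $\|Az\|_2$ by the leading Ritz value of $A$ from a short Lanczos bidiagonalization started at $z$: Kaniel--Saad type estimates give geometric convergence of this Ritz value to $\|A\|_2$ at a rate governed by spectral gaps, while the random start supplies a controllable initial component in the top singular direction. Re-running the Beta argument at the Ritz-value level and choosing the number of Lanczos steps as a polynomial in $\log(1/\epsilon)$ and $1/\theta$ then forces $1/\gamma \le 1+\theta$. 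Since Lemma~\ref{LemPCE} is essentially a packaging of the estimator of \cite{R_Hochstenbach13}, the remaining technical step is to track the explicit constants in these tail bounds and in the Kaniel--Saad estimates, which I would carry out following that reference.
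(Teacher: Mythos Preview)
The paper does not actually prove this lemma: it is introduced with the sentence ``we summarize the method given in \cite{R_Hochstenbach13} as the following lemma'' and no argument whatsoever is supplied---the result is simply quoted from Hochstenbach's paper. So there is nothing to compare your proof against on the paper's side.

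That said, your sketch is a faithful outline of how the estimator in \cite{R_Hochstenbach13} is built and justified: a Ritz value from a short Lanczos bidiagonalization started at a random unit vector furnishes the deterministic lower bound $\alpha\le\|A\|_2$; the Beta$(\tfrac12,\tfrac{p-1}{2})$ tail on the squared component of the random start along the top singular direction, combined with Kaniel--Saad type convergence bounds for the Ritz values, gives the probabilistic upper bound $\beta$; and the number of Lanczos steps is increased until $\beta/\alpha\le 1+\theta$. You have therefore gone further than the paper itself. The one point worth flagging is that your definition $\beta:=\alpha/\gamma$ with $\gamma$ coming solely from the Beta tail is only the one-step version; to actually force $1/\gamma\le 1+\theta$ you need the Lanczos amplification, and the precise dependence of the required number of steps on $\epsilon$, $\theta$, and the singular-value gaps is exactly the technical content of \cite{R_Hochstenbach13} that you (correctly) defer to.
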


Based on the above lemma, we can propose the following algorithm to give a sharp estimate of the condition number for the STLS problem.
\begin{algorithm}[htbp]
\caption{PCE method}\label{Algpce}
\begin{enumerate}
  \item Getting the start vector $v\in \mathbb{R}^{2m+n}$ from $\mathcal{U}(S_{2m+n-1})$.
  \item Compute the matrix in \eqref{eqform2} and let
\begin{align*}
    \widehat{K}=M^{-1}\begin{bmatrix}
        A^T & \|x_S\|_2A^T\left(I_m-\frac{1}{\|r\|_2^2}rr^T\right) & \|r\|_2\left(I_n-\frac{1}{\|r\|_2^2}A^Trx_S^T\right)
        \end{bmatrix}.
\end{align*}
  \item Compute $\alpha$ and $\beta$ of $\widehat{K}$ by probabilistic spectral norm estimator \cite{R_Hochstenbach13}.
  \item $\kappa_{pceF}(A,\lambda b)=\frac{\alpha+\beta}{2}.$
\end{enumerate}
\end{algorithm}

It should be pointed out that Step 2 can be done by the Matlab function \texttt{normprob.m} which can be downloaded from \url{http://www.win.tue.nl/~hochsten/eigenvaluetools/} . In the practical implementation of Algorithm \ref{Algpce} and as suggested in \cite{R_Hochstenbach13}, explicitly forming matrix $\widehat{K}$ may not be necessary, because what we really need is the product of a random vector with $\widehat{K}$ and $\widehat{K}^T$. Hence again, some techniques in solving linear system can be employed to reduce the computational burden, especially for large scale problems. More derivation and extension on the probabilistic condition estimation can be found in \cite{R_Hochstenbach13, Gaaf15}.

In the recent paper \cite{DiaoW16}, the authors proposed to use the small sample condition estimation (SCE) method \cite{KL94} to estimate the condition numbers of TLS problem, and proposed two ways to estimate the normwise condition number. The SCE method used in \cite{DiaoW16} can be directly applied to estimate the normwise condition number of STLS problem, so we adapt their Algorithm 2 needing less CUP time than Algorithm 1 with some modifications as Algorithm~\ref{AlgSCE}.
\begin{algorithm}[htbp]{
\caption{SCE method}\label{AlgSCE}
\begin{enumerate}
  \item {Generate $k$ vectors $z_1$, $z_2,\cdots,z_k\in \mathbb{R}^n$ with entries in the uniform continuous distribution on the interval $(0,1)$, where $k$ is the sample size. Othonomalize these vectors via modified Gram-Schmidt orthogonalization process.}
  \item Approximate the Willis factors $\omega_n$ and $\omega_k$ by
  \begin{equation*}
    \omega_n\approx \sqrt{\frac{2}{\pi(p-\frac{1}{2})}}\quad \textrm{and} \quad   \omega_k\approx \sqrt{\frac{2}{\pi(k-\frac{1}{2})}}.
  \end{equation*}
  \item For $i=1,\cdots,k$, compute
  \begin{equation*}
    \kappa_{i}=\left\|z_i^TM^{-1}\left((1+\|x_S\|^2_2)A^TA-A^Trx_S^T-x_Sr^TA+\|r\|_2^2I_n\right)M^{-1}z_i\right\|_2^{\frac{1}{2}}.
  \end{equation*}
  \item Estimate the absolute normwise condition number by
  \begin{eqnarray*}
{\kappa} _{sceF}(A,b) = \frac{\omega_k}{\omega_n}\sqrt{\sum_{i=1}^k\kappa_i^2} .
\end{eqnarray*}
\end{enumerate}}
\end{algorithm}

\begin{remark}
In the implementation of Algorithm \ref{AlgSCE}, we need to compute $M^{-1}z_i$, which is usually done by solving the linear system
\begin{eqnarray*}
My=z_i.
\end{eqnarray*}
Since $M$ is positive definite, the preconditioned conjugate gradient(PCG) method can be employed \cite{Bjo00}. Moreover, in practice the sample size $k=3$ will give a relative high accurate estimation of condition number.  Diao et al. \cite{DiaoW16} showed that the computational cost of SCE method is $O(mn+n^2)$. Here, we need to make some comments on the computational complexity of Algorithms \ref{Algpce} and \ref{AlgSCE}. As pointed out in \cite[Section~4.1]{LiW16b}, it is not easy to give an exact comparison of the computational complexity of these two algorithms. Because the probabilistic spectral norm estimator is based on Lanczos iteration method, and the dimension of Krylov space is automatically determined by $\epsilon$ \cite{R_Hochstenbach13}, whereas the SCE method mainly depends on matrix-vector product and one orthonomalization procedure. So instead of counting flops we report the CUP time to compare the efficiency of Algorithms \ref{Algpce} and \ref{AlgSCE}.

\end{remark}

\begin{example}
Since investigating the influence of different forms on the computation of condition number for STLS problem is our purpose, we construct the following random STLS problem, which is similar to \cite{BabG11}. Let $[A\; \lambda b]$ be defined by
\begin{eqnarray*}
    \begin{bmatrix}
       A & \lambda b \\
     \end{bmatrix}=Y\begin{bmatrix}
                      D \\
                      0 \\
                    \end{bmatrix}Z^T\in \mathbb{R}^{m\times (n+1)},
                    \; Y=I_m-2yy^T,\;Z=I_{n+1}-2zz^T,
\end{eqnarray*}
where $y\in \mathbb{R}^{m}$, $z\in \mathbb{R}^{n+1}$ are random unit vectors, and $D=\mathrm{diag}(n,n-1,\cdots,1, 1-e_p)$ for given parameter $e_p$. Due to the interlacing property \cite[pp.~178]{Bjor96}, we get
\begin{eqnarray*}
    \hat{\sigma}_n-\sigma_{n+1}\leq \sigma_n-\sigma_{n+1}=e_p.
\end{eqnarray*}
Thus $e_p$ gives a measure of the distance of the problem to nongenericity, and the solution $x_S$ is given by \eqref{xS}. By varying $\lambda$, $e_p$ and the order of matrix, we report the CUP time in seconds for computing the condition number of STLS problem with different forms. All the computations are performed in Matlab R2010b on a PC with Intel i5-6600M CPU 3.30 GHz and 4.00 GB RAM.

First, we compare two "naive" methods, that is, we first compute the explicit form of the matrices in \eqref{eqblk0} and \eqref{eqform2} and then compute its spectral norms by the built-in function \texttt{norm($\cdot$,2)}. We repeat it 200 times for one group of settings, and report the mean values of CPU time in Table \ref{Table1}.
\begin{table}[htp]
\centering
\small{
  \caption{Average CPU time in seconds for two "naive" methods}\label{Table1}
  \begin{tabular}{llccc}
    \hline
      &   & $m=100,n=70$ & $m=200,n=150$ & $m=500,n=300$ \\
    \hline
        &   & $\kappa_F(A,\lambda b)|\kappa_{F2}(A,\lambda b)$ & $\kappa_F(A,\lambda b)|\kappa_{F2}(A,\lambda b)$ & $\kappa_F(A,\lambda b)|\kappa_{F2}(A,\lambda b)$  \\
    \hline
    $\lambda=0.05$ & $e_p=0.1$  &0.0442$|$0.0025  &  1.0020$|$0.0086  & 13.3904$|$0.0711  \\
                   & $e_p=0.001$&0.0445$|$0.0022  &  1.0730$|$0.0114  & 12.8509$|$0.0612  \\
    \hline
      $\lambda=5$ & $e_p=0.1$   &0.0441$|$0.0022  &  1.0113$|$0.0085  & 13.0997$|$0.0671 \\
                  & $e_p=0.001$ &0.0436$|$0.0029  &  1.0597$|$0.0107  & 12.9750$|$0.0670\\
    \hline
  \end{tabular}}
\end{table}
From Table~\ref{Table1}, we can see that when $m=500,n=300$, computing \eqref{eqblk0} becomes very time consuming due to the large order of matrix, but \eqref{eqform2} still works well. Moreover, we can also find that the CPU time for computing \eqref{eqblk0} is always smaller than that for \eqref{eqform2}.

Second, Algorithm \ref{Algpw} provides a method for avoiding the Kronecker product. So with respect to accuracy and running time, we give a comparison of the efficiency of computing condition number for four different methods: (1) use the formula \eqref{eqform2} exactly(EXA); (2)  use Algorithm \ref{Algpw}(PW). (3) use Algorithm \ref{Algpce}(PCE). For Algorithm \ref{Algpce}, the user-chosen parameters are given by $\epsilon=0.001$, $\delta= 0.01$; (4) use Algorithm \ref{AlgSCE}, and the sample size $k=3$. In Algorithm \ref{Algpw},  the initial vector $y$ is a random vector with elements from standard normal distribution, and the algorithm terminates when the difference between two successive values of $v$ is less than $10^{-8}$ or the number of iterations exceeds 500. To show the accuracy of estimation, we use formula \eqref{eqform2} as the bench mark, and define the following ratios
\begin{eqnarray*}
\rm ratio1=\frac{PW}{EXA},\quad ratio2=\frac{PCE}{EXA}, \quad ratio3=\frac{PCE}{EXA}.
\end{eqnarray*}
The ratios are plotted in Figure \ref{Fig.ratio}, we only report the case $\lambda=5$ and $e_p=0.1$, since the results for other cases are similar.
\begin{figure}[htbp]
  \centering
  \includegraphics[width=0.67\textwidth,height=0.5\textwidth]{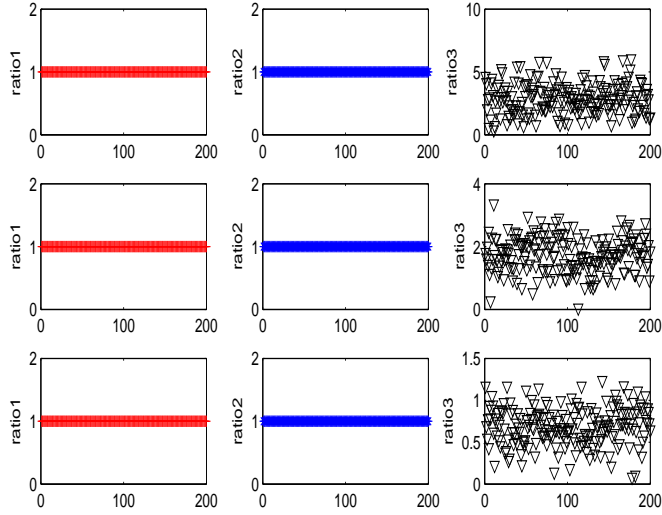}\\
  \caption{Accuracy of condition number estimators; the first row $m=200,n=150$, the second row $m=500,n=300$, the third row $m=1000,n=700$.}\label{Fig.ratio}
\end{figure}
From Figure \ref{Fig.ratio}, we can see that both PW and PCE methods give very accurate estimates of the condition number. The SCE method also gives acceptable estimates, since the ratios are contained in the interval $(0.1,10)$ \cite{High02}. This coincides with the results in \cite{DiaoW16}. Thus, if the accuracy of estimation is required, PW and PCE methods are preferred.

Now, we turn to the running time of these four different methods with 200 replications for each case. The numerical results are presented in Table~\ref{Table2}.
\begin{table}[htp]
\centering
\small{
  \caption{Average CPU time in seconds for EXA, PW, PCE and SCE methods}\label{Table2}
  \begin{tabular}{lllccc}
    \hline
                   &            &       & $m=200,n=150$ & $m=500,n=300$ & $m=1000,n=700$ \\
    \hline
    $\lambda=0.05$ & $e_p=0.1$  & EXA   & 0.0073 & 0.0491  & 0.4071 \\
                   & $e_p=0.1$  & PW    & 0.0074 & 0.0843  & 0.5686 \\
                   & $e_p=0.1$  & PCE   & 0.0033 & 0.0051  & 0.0218 \\
                   & $e_p=0.1$  & SCE   & 0.0020 & 0.0111  & 0.1108 \\
    \hline
                   & $e_p=0.001$&  EXA  & 0.0070 &  0.0495 & 0.4093 \\
                   & $e_p=0.001$&  PW   & 0.1502 &  1.5810 & 11.9597\\
                   & $e_p=0.001$&  PCE  & 0.0122 &  0.0136 & 0.0299 \\
                   & $e_p=0.001$&  SCE  & 0.0019 &  0.0114 & 0.1096 \\
    \hline
    $\lambda=5$    & $e_p=0.1$  & EXA   & 0.0071  & 0.0487 & 0.3810 \\
                   & $e_p=0.1$  & PW    & 0.0073  & 0.0811 & 0.5421 \\
                   & $e_p=0.1$  & PCE   & 0.0033  & 0.0049 & 0.0211 \\
                   & $e_p=0.1$  & SCE   & 0.0019  & 0.0110 & 0.1136 \\
    \hline
                   & $e_p=0.001$&  EXA  & 0.0072  & 0.0487 & 0.4409 \\
                   & $e_p=0.001$&  PW   & 0.0056  & 0.0649 & 0.4861 \\
                   & $e_p=0.001$&  PCE  & 0.0118  & 0.0141 & 0.0298 \\
                   & $e_p=0.001$&  SCE  & 0.0021  & 0.0111 & 0.1043 \\
    \hline
  \end{tabular}}
\end{table}
From Table~\ref{Table2}, we can find that, in most cases, these four methods perform very efficient in estimating the condition number of STLS problem. We also note that the PCE method is the most efficient especially for the large order cases, and EXA, PW and SCE have comparable performance, except for the case $\lambda=0.05$ and $e_p=0.001$, in which the variances of PW's CPU time are $0.3457$, $16.5316$ and $758.6435$. An explicit derivation for the underlying reason may be unavailable now, so we give some investigation through simulation. According to Remark \ref{rmk3.1}, we generate 20 groups of data, and use 100 different random vectors as the initial vector for each group of data to test its influence on the CPU time of PW method. We give the box-plot of the results as Figure \ref{Fig1}.
\begin{figure}[htbp]
  \centering
  \includegraphics[width=0.67\textwidth,height=0.5\textwidth]{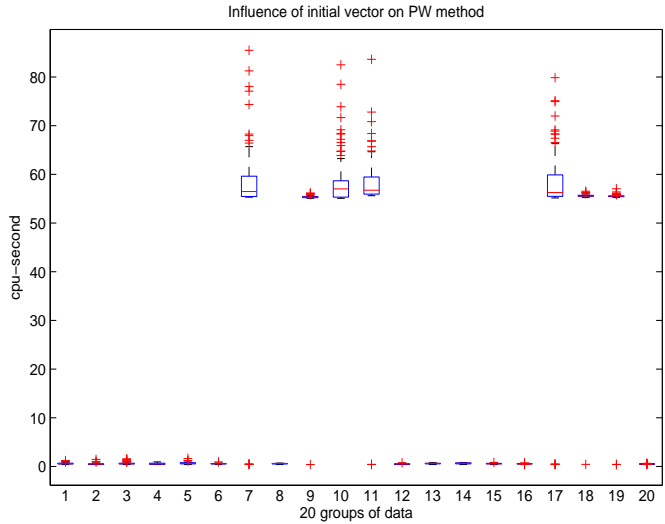}\\
  \caption{Box-plot for $\lambda=0.05$, $e_p=0.001$, $m=1000$ and $n=700$.}\label{Fig1}
\end{figure}
From Figure \ref{Fig1}, we find that when $m$ and $n$ are large, the PW method may perform unstable for some groups of data and has a lot of outliers which lead to its large mean values. However, we cannot conclude that the PW method is inefficient in estimating the condition number of STLS problem. Because, for most groups of data and with variant initial vectors, the PW method performs quite well. This may imply that our construction tends to give an ill-posed STLS problem for small $\lambda$ and $e_p$.

From our numerical experiment, we suggest that for moderate scale STLS problems computing the condition number via \eqref{eqform2} is recommended. The reason is that compared with \eqref{eqblk0}, \eqref{eqform2} not only avoids computing a Kronecker product and saves storage space but also needs less CPU time and preserves high accuracy.  When the coefficient matrix of the STLS problem is large, the PCE method can give highly accurate estimates of the condition number and needs less CPU time.
\end{example}

\section{Concluding remark}
\label{Sect4}
In this paper, we present some new expressions of the condition number for the STLS problem. The new expressions do not contain a Kronecker product, and make it possible to store the condition number in the computer for large scale problems. The new and compact forms of the condition numbers for the STLS and TLS problem are of certain interest for the practitioners from other areas like engineering, statistics and so on. This is because the new forms need less storage space and is very easy to use. In addition, to avoid explicitly forming the matrix in the expression of condition number, some iterative methods are also introduced. We also present some numerical experiments to check the proposed algorithms, and find that our algorithms have very good performance for most of our settings. However, in our experiment, we note that when $\lambda$ and $e_p$ are small, and the matrix is large, the power method can be very inefficient. Due to the difficulty of theoretical justification, only some simulations are given to explore the underlying reason, which is not enough and further investigation should be carried out in the future. To be on the safe side, we recommend using $\kappa_{F2}(A,\lambda b)$ or PCE method to compute the condition number of STLS problem in practical applications.


\end{document}